\documentclass[10pt]{amsart}
\usepackage{a4wide}
\usepackage{enumerate}
\usepackage{amssymb}
\usepackage{amsthm}
\usepackage{exscale}
\usepackage{hyperref}
\newcommand{\RR}{\mathbb{R}}

\newcommand{\NN}{\mathbb{N}}

\newcommand{\EE}{\, \mathbb{E} \,}
\newcommand{\PP}{\mathbb{P}}
\newcommand{\cA}{\mathcal{A}}
\newcommand{\cC}{\mathcal{C}}

\newcommand{\Tr}{{\mathop{\mathrm{Tr} \,}}}

\renewcommand{\L}{\Lambda}

\newcommand{\loc}{{\mathop{\mathrm{loc}}}}

\newcommand{\be}{\begin{equation}}
\newcommand{\ee}{\end{equation}}

\newtheorem{thm}{Theorem}
\newtheorem{lem}[thm]{Lemma}

\newtheorem{cor}[thm]{Corollary}
\theoremstyle{definition}

\theoremstyle{remark}

\newcommand{\hm}[1]{\leavevmode{\marginpar{\tiny%
$\hbox to 0mm{\hspace*{-0.5mm}$\leftarrow$\hss}%
\vcenter{\vrule depth 0.1mm height 0.1mm width \the\marginparwidth}%
\hbox to
0mm{\hss$\rightarrow$\hspace*{-0.5mm}}$\\\relax\raggedright #1}}}

\begin{document}
\setlength{\parindent}{0em}

\title[Continuity of the integrated density of states for Gaussian potentials]
{Lipschitz-continuity of the integrated density of states for Gaussian random potentials}

\author[I.~Veseli\'c] {Ivan Veseli\'c}
\address{Fakult\"at f\"ur Mathematik,\, 09107\, TU-Chemnitz, Germany  }
\urladdr{www.tu-chemnitz.de/mathematik/stochastik}

%
\keywords{random Schr\"odinger operators, stationary Gaussian stochastic field, integrated density of states, Wegner estimate}

\subjclass{MSC(2010) 60H25}
 \thanks{To appear in \href{http://www.springerlink.com/content/0377-9017}{Letters in Mathematical Physics}
with 
\href{http://dx.doi.org/10.1007/s11005-011-0465-1}{DOI: 10.1007/s11005-011-0465-1}
}

\begin{abstract}
The integrated density of states of a Schr\"odinger operator with random potential given by a homogeneous Gaussian field 
whose covariance function is continuous, compactly supported and has positive mean, is locally uniformly Lipschitz-continuous. 
This is proven using a Wegner estimate.
\end{abstract}

\maketitle

Let $d \in \NN$, $\|x\|: = (x_1^2+ \dots +x_d^2)^{1/2}$ and $|x| := \max (|x_1|, \dots , |x_d|)$ for  any $x \in \RR^d$,
$\L_R := \{x \in \RR^d \mid |x| <R\}$, $A\colon \RR^d\to \RR^d$ be measurable with $x\mapsto |A(x)|$ in $L^2_{\loc}(\RR^d)$,
$(\Omega,\cA, \PP)$  a complete probability space  with associated expectation $\EE$, $V\colon \Omega \times  \RR^d\to \RR$ 
a separable, jointly measurable version of a $\RR^d$-homogeneous Gaussian stochastic field with zero mean and covariance function $x\mapsto \cC(x):=\EE(V(\cdot, x)V(\cdot, 0))$
which is continuous at $x=0$ and satisfies $\cC(0) \in (0,\infty)$. .
For each $L\in (0,\infty)$ the restricted random operator $H_{\omega,L} := \sum_{j=1}^d (i \frac{\partial}{\partial x_j} +A_j)^2+ V(\omega, \cdot)$ on $\L_L$
with either Neumann or Dirichlet boundary conditions is almost surely selfadjoint, lower semibounded and has purely discrete spectrum. For a detailed dicussion of these facts see
\cite{HupferLMW-01a}. The hypotheses stated so far will be referred to as (H). The indicator function of a set $S$ is denoted by $\chi_S$.

\begin{thm}
Assume that (H) holds as well as
\begin{equation}
\cC \text{ is supported in  $\L_R$ and } \bar\cC := \int dx\, \cC(x)  >0.
\setcounter{equation}{2}
\end{equation}
Then there exists a isotone function $C_{WG}:\RR\to\RR$ depending only the covariance $\cC$
such that for all $L\in [1, \infty)$, $E_1\leq E_2\in\RR$ and both choices of b.{}c.{} in (H) the Wegner estimate
\begin{equation}
N_L(E_2)-N_L(E_1)  \leq C_{WG}(E_2) \, (2L)^d \, (E_2-E_1)
\end{equation}
holds, where $N_L(E) := \EE \big \{  \Tr \chi_{(-\infty,E]} (H_{\omega,L})\big\} $.
\end{thm}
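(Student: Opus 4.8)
The plan is to prove the bound by a spectral-averaging (Wegner) argument in which the three hypotheses on $\cC$ each play a distinct role: positivity of the mean $\bar\cC$ supplies a covering family of admissible perturbation directions, compact support of $\cC$ makes these directions finite-range correlated (so that $\sim(2L)^d$ of them act almost independently), and continuity of $\cC$ lets me build the directions by Riemann-sum approximation.

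First I would construct, for a fixed lattice $\delta\ZZ^d$ with $\delta$ small enough (depending only on $\cC$), local shift functions $u_k(x)=\frac{\delta^d}{\bar\cC}\sum_{y\in\delta\ZZ^d\cap(k+\L_{R+1})}\cC(x-y)$, indexed by $k$ in a unit lattice meeting $\L_L$. Since $\cC$ is positive definite and continuous at $0$ it is uniformly continuous, so the inner sum is a Riemann approximation of $\int\cC=\bar\cC$; as $\cC$ is supported in $\L_R$ the finite window $k+\L_{R+1}$ captures the full mass, giving $u_k\ge c>0$ on the cell around $k$ with $c$ and $\sup_x u_k(x)$ bounded uniformly in $k,L$. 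Finite linear combinations of translates $\cC(\cdot-y)$ lie in the reproducing-kernel (Cameron--Martin) space $\cH$ of the field, with $\langle\cC(\cdot-y),\cC(\cdot-y')\rangle_{\cH}=\cC(y-y')$; hence $\EE(\Phi_{u_j}\Phi_{u_k})=\langle u_j,u_k\rangle_{\cH}$ vanishes once $|j-k|$ exceeds a fixed multiple of $R$. Summing gives the covering condition $\sum_k u_k\ge c$ on $\L_L$.

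Next, the trace bound $\Tr\chi_{(E_1,E_2]}(H_{\omega,L})\le c^{-1}\sum_k\Tr\big(u_k^{1/2}\chi_{(E_1,E_2]}(H_{\omega,L})u_k^{1/2}\big)$, which follows from $\sum_k u_k\ge c$ on the box, reduces the estimate to bounding each summand in expectation. For a single $k$ I would condition on the field outside a fixed neighbourhood of $k$; by finite range the conditional law of $\Phi_{u_k}$ is Gaussian with conditional variance bounded below (nondegeneracy from $\bar\cC>0$ and the construction), so its density $r_k$ is bounded with $\|r_k\|_{\Var}$ controlled uniformly. Shifting this coordinate corresponds to $V\mapsto V+\lambda u_k$, with $\partial_\lambda H=u_k\ge0$; the Hellmann--Feynman identity together with one integration by parts in $\lambda$ yields $\int\Tr\big(u_k^{1/2}\chi_{(E_1,E_2]}(H+\lambda u_k)u_k^{1/2}\big)r_k(\lambda)\,d\lambda\le (E_2-E_1)\int|r_k'(\lambda)|\,\Tr\chi_{(-\infty,E_2]}(H+\lambda u_k)\,d\lambda$, i.e. a factor $(E_2-E_1)$ times an eigenvalue count weighted by the total variation of $r_k$.

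Finally I would close the estimate with the a priori Weyl-type bound $\EE\,\Tr\chi_{(-\infty,E]}(H_{\omega,L})\le C(E)(2L)^d$, $C$ isotone, obtained uniformly in the boundary condition from a Golden--Thompson/Feynman--Kac trace estimate using that $V$ is pointwise Gaussian with variance $\cC(0)<\infty$; crucially this controls the integrated counting function, not its derivative, so it is not circular. Inserting it into the previous display, summing the $\sim(2L)^d$ contributions, and absorbing the $\lambda$-integrals of the Gaussian $|r_k'|$ against the polynomially growing Weyl bound produces $C_{WG}(E_2)(2L)^d(E_2-E_1)$ with $C_{WG}$ isotone and depending only on $\cC$. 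The main obstacle is the third step: passing from the independent single-site averaging of the classical Wegner argument to finite-range correlated Gaussian coordinates, i.e. checking that conditioning on a buffer region keeps each conditional density bounded and nondegenerate uniformly in $L$ and $k$, so that the $(2L)^d$ contributions genuinely add rather than compound.
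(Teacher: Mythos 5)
There is a genuine gap, and it sits exactly at the heart of what makes the theorem nontrivial. Your construction uses sharply truncated Riemann sums of translates of $\cC$, namely $u_k(x)=\frac{\delta^d}{\bar\cC}\sum_{y\in\delta\ZZ^d\cap(k+\L_{R+1})}\cC(x-y)$, as \emph{monotone} perturbation directions: you write $\partial_\lambda H=u_k\ge 0$, you take square roots $u_k^{1/2}$, and you deduce the covering bound $\sum_k u_k\ge c$ on $\L_L$ from $u_k\ge c$ on the cell around $k$. But hypothesis (2) only requires the \emph{integral} $\bar\cC$ to be positive; $\cC$ itself may take negative values (if $\cC\ge 0$ pointwise one is in the previously known case (5a), and the theorem has no new content). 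For $x$ outside the cell around $k$ but within distance of order $R$ of the window, the sum defining $u_k(x)$ captures only part of the sign-indefinite mass of $\cC(x-\cdot)$, cut off by the sharp boundary of $k+\L_{R+1}$, and nothing prevents it from being negative. This breaks the argument at three places at once: the covering inequality $\sum_k u_k\ge c$ does not follow, since at a point in the cell of $k$ the neighbouring terms $u_j$, $j\ne k$, can contribute large negative values (of size governed by $\|\cC\|_1/\bar\cC$, which is unbounded when cancellations make $\bar\cC\ll\|\cC\|_1$); the traces $\Tr\big(u_k^{1/2}\chi_{(E_1,E_2]}(H_{\omega,L})u_k^{1/2}\big)$ are not even defined, and $\Tr\big(\chi_{(E_1,E_2]}u_k\chi_{(E_1,E_2]}\big)$ need not be nonnegative; and, fatally, the spectral-averaging step fails, because Hellmann--Feynman gives eigenvalue derivatives $\langle\psi_n,u_k\psi_n\rangle$ of no definite sign, so eigenvalues do not move monotonically in $\lambda$ and the integration by parts does not produce the one-sided counting-function bound you claim. (Your step 4, the a priori Weyl-type bound, is fine; and the slightly off conditioning in step 3 could be repaired by the orthogonal decomposition $V=\sigma_k^{-2}\Phi_{u_k}\,u_k+V^{\perp}$ rather than conditioning on the field outside a neighbourhood --- but only once positivity of the direction is secured, which it cannot be here.)

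The paper resolves precisely this difficulty, and not by a local construction: it does not reprove a Wegner estimate at all, but reduces (2) to condition (4) of the cited references \cite{FischerHLM-97b,HupferLMW-01a}, whose Gaussian averaging machinery then yields the theorem. The whole content of that reduction, Lemma~6, is the production of a \emph{globally} positive shift direction: with $f(x)=\mathrm{e}^{-b|x|}$ and $b\le\bar\cC(2\mathrm{e}R\|\cC\|_1)^{-1}$ one has $(f\ast\cC)(x)\ge\frac{\bar\cC}{2}f(x)>0$ for all $x$, because the relative oscillation of $f$ over distance $R$ is at most $\mathrm{e}^{bR}-1\le\bar\cC/(2\|\cC\|_1)$, so the error $\int_{x+\L_R}(f(y)-f(x))\cC(x-y)\,dy$ is dominated by the main term $\bar\cC f(x)$ even when $\cC$ oscillates in sign. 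Note the structural obstruction this reveals for your approach: any sharply cut, compactly supported weight has unbounded relative variation on scale $R$ near the edge of its support, so no finite-range family of directions built from a sign-indefinite $\cC$ can be pointwise nonnegative. The exponential tails of $f$ are not a technical convenience; they are the mechanism that converts the one-dimensional hypothesis $\bar\cC>0$ into the pointwise positivity that any monotone spectral-averaging argument --- yours included --- actually needs.
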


In \cite[Thm.~1]{FischerHLM-97b} and and \cite[Cor.~4.3]{HupferLMW-01a}  the same statement as in the above Theorem  is derived, in the case that assumption (2)
is replaced by
\begin{itemize}
 \item[(4)] There exists a finite signed Borel-measure $\mu$ on $\RR^d$, an open $\Gamma \subset \RR^d$ and $\gamma \in  (0,\infty)$ such that
 \begin{displaymath}
 \int \mu(dx)\int \mu(dy) \cC(x-y) = \cC(0) \text{ and } \mu \ast \cC \ge \cC(0) \, \gamma \, \chi_\Gamma.
 \end{displaymath}
\end{itemize}
In the references the reader may find explicit bounds on the function $C_{WG}$. 
To prove Theorem 1 it is sufficient to show that (2) implies (4).
Before we do so in Lemma~6 below, let us state specific cases under which it was known before (cf.~the references in \cite{HupferLMW-01a})
that a measure $\mu$ as in (4) exists:
\setcounter{equation}{4}
\begin{subequations}
\begin{equation}
\cC(x) \ge 0 \text{ for all $x\in \RR^d$ and $\cC$ not identically vanishing, }
\end{equation}
\begin{equation}
d=1, \quad  \cC(x) = \int w(x-y)w(y) dy, \text{and } w=\chi_{[-3,3]}-\frac{5}{4}\chi_{[-1,1]},
\end{equation}
\begin{equation}
\cC(x) = \cC(0) \exp\big(-\|x\|^2/(2 t^2)\big)  \big( 1-7 \|x\|^2/(16 t^2) + \|x\|^4/(32 t^4)\big), t>0 \text{ arbitrary}.
\end{equation}
\end{subequations}
In all three listed cases the integral $\bar \cC$ is positive. 
In examples (5a) and (5c) the covariance function need not be of compact support.
Note that while (4) is an infinite family of conditions (one for each $x\in \RR^d$),
condition (2) is one-dimensional.

\setcounter{thm}{5}
\begin{lem}
Assume (H) and (2). Choose $b$ positive with $b\leq \bar\cC\, (2 \mathrm{e}R \|\cC\|_1)^{-1} $ and $f(x) = \mathrm{e}^{-b|x|}$.
Then for all $x \in \RR^d$ we have
\setcounter{equation}{6}
\begin{equation}
 (f\ast\cC)(x):= \int dy \, f(y) \, \cC(x-y) \ge \frac{\bar\cC}{2} f(x).
\end{equation}
In particular, condition (4) holds.
\end{lem}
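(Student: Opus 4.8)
The plan is to prove the pointwise bound (7) by splitting the convolution into a principal term proportional to $f(x)$ and a remainder, and then to show that the prescribed choice of $b$ forces the remainder to be at most half of the principal term. After the substitution $z=x-y$ and using that $\cC$ vanishes outside $\L_R$, I would add and subtract $f(x)\int \cC(z)\,dz=\bar\cC f(x)$ to write
\[
(f\ast\cC)(x)=\int_{\L_R} dz\, f(x-z)\,\cC(z)=\bar\cC\, f(x)+\int_{\L_R} dz\,\big(f(x-z)-f(x)\big)\,\cC(z).
\]
The lemma then reduces to the single estimate $\big|\int_{\L_R} dz\,(f(x-z)-f(x))\,\cC(z)\big|\le \tfrac{\bar\cC}{2}\,f(x)$ for all $x\in\RR^d$, since (7) follows at once from $(f\ast\cC)(x)\ge \bar\cC f(x)-\tfrac{\bar\cC}{2}f(x)$.

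For the remainder I would factor out $f(x)=\mathrm{e}^{-b|x|}$ and write $f(x-z)-f(x)=f(x)\,(\mathrm{e}^{-b(|x-z|-|x|)}-1)$. The reverse triangle inequality gives $\big||x-z|-|x|\big|\le |z|$, and the elementary bound $|\mathrm{e}^{-t}-1|\le |t|\,\mathrm{e}^{|t|}$ then yields $|f(x-z)-f(x)|\le f(x)\,b|z|\,\mathrm{e}^{b|z|}$. Since $\cC$ is supported in $\L_R$, on the domain of integration $|z|<R$, so $b|z|\le bR$ and $\mathrm{e}^{b|z|}\le \mathrm{e}^{bR}$. Integrating against $|\cC(z)|$ and using $\int|\cC(z)|\,dz=\|\cC\|_1$, which is finite because $|\cC|\le\cC(0)$ by positive definiteness of the covariance and $\cC$ has bounded support, gives
\[
\Big|\int_{\L_R} dz\,\big(f(x-z)-f(x)\big)\,\cC(z)\Big|\le f(x)\,bR\,\mathrm{e}^{bR}\,\|\cC\|_1.
\]

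The crux, and what I expect to be the one delicate point, is to see that the prescribed $b\le \bar\cC\,(2\mathrm{e}R\|\cC\|_1)^{-1}$ makes this at most $\tfrac{\bar\cC}{2}f(x)$; the factor $\mathrm{e}$ in the denominator is tailored exactly for this. First, since $\bar\cC=\int\cC\le\int|\cC|=\|\cC\|_1$, the choice of $b$ gives $bR\le \bar\cC/(2\mathrm{e}\|\cC\|_1)\le 1/(2\mathrm{e})\le 1$, hence $\mathrm{e}^{bR}\le\mathrm{e}$. Second, the same choice gives $bR\|\cC\|_1\le \bar\cC/(2\mathrm{e})$. Multiplying the two observations yields $bR\,\mathrm{e}^{bR}\,\|\cC\|_1\le (\bar\cC/2\mathrm{e})\cdot\mathrm{e}=\bar\cC/2$, which closes the estimate and proves (7).

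Finally, to deduce (4) I would take $\mu=\alpha f\,dx$ with a normalising constant $\alpha>0$. The double integral $\int\!\int f(x)f(y)\cC(x-y)\,dx\,dy=\int f\,(f\ast\cC)$ is finite, as $f\in L^1$ and $f\ast\cC\in L^\infty$, and by (7) it is bounded below by $\tfrac{\bar\cC}{2}\int f^2\,dx>0$; choosing $\alpha^2$ equal to $\cC(0)$ divided by this double integral enforces $\int\mu(dx)\int\mu(dy)\cC(x-y)=\cC(0)$. Then $\mu\ast\cC=\alpha(f\ast\cC)\ge \alpha\tfrac{\bar\cC}{2}f$, and on a fixed bounded open set, say $\Gamma=\L_1$, the function $f$ is bounded below by $\mathrm{e}^{-b}$, so with $\gamma:=\alpha\bar\cC\,\mathrm{e}^{-b}/(2\cC(0))\in(0,\infty)$ one obtains $\mu\ast\cC\ge \cC(0)\,\gamma\,\chi_\Gamma$. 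This verifies both requirements in (4).
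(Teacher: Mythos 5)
Your proof is correct and follows essentially the same route as the paper: the same decomposition $(f\ast\cC)(x)=\bar\cC f(x)+\int_{\Lambda_R}\big(f(x-z)-f(x)\big)\cC(z)\,dz$, the same elementary exponential estimate exploiting $bR\le \bar\cC/(2\mathrm{e}\|\cC\|_1)\le 1/(2\mathrm{e})$ to bound the remainder by $\tfrac{\bar\cC}{2}f(x)$, and the same measure $\mu=\alpha f\,dx$ to verify condition (4). The only deviations are cosmetic: you take $\Gamma=\Lambda_1$ with $\gamma=\alpha\bar\cC\,\mathrm{e}^{-b}/(2\cC(0))$ where the paper takes $\Gamma=\Lambda_{1/b}$ with $\gamma=\alpha\bar\cC/(2\mathrm{e}\,\cC(0))$, and you spell out the finiteness of $\|\cC\|_1$ and of the normalising double integral, which the paper leaves implicit.
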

\begin{proof}
 Since
\begin{equation*}
 (f\ast\cC)(x)= \bar \cC f(x)+ \int_{x+\L_R} dy \big(f(y)-f(x) \big) \cC(x-y),
\end{equation*}
(7) holds, if the absolute value of the second term is bounded by  $ f(x)\,\bar\cC/2$. Note that 
\begin{equation}
\Big| \int_{x+\L_R} dy \big(\mathrm{e}^{-b|y|}-\mathrm{e}^{-b|x|} \big) \cC(x-y)    \Big|
\le
\mathrm{e}^{-b|x|}  \int_{x+\L_R} dy \big(\mathrm{e}^{b|x|-b|y|}-1 \big) |\cC(x-y)|.
\end{equation}
For $|x-y|\leq R$ we have  $\big| b|x|-b|y|\big| \leq \frac{\bar\cC}{2 \mathrm{e} \|\cC\|_1}	 \leq \frac{1}{2 \mathrm{e} } $
and thus $\mathrm{e}^{b|x|-b|y|}-1   \leq \frac{\bar\cC}{2 \|\cC\|_1}$.
Hence
\begin{equation*}
(8) \leq 
\mathrm{e}^{-b|x|}  \frac{\bar\cC}{2 \|\cC\|_1}\int_{x+\L_R} dy  \,|\cC(x-y)|= \frac{\bar\cC}{2} f(x).
\end{equation*}
Now we show that the measure $\mu(dx) := \alpha f(x) dx$ with an appropriate choice of $\alpha\in \RR$
satisfies (4).  Ineq.~(7) implies $\int dx\int dy f(x)f(y) \cC(x-y)  \geq \frac{\alpha^2 \bar\cC}{2} \|f\|_2^2>0$. 
Thus the choice $\alpha:= \sqrt{\cC(0)} \big(\int dx\int dy f(x)f(y) \cC(x-y)\big)^{-1/2}$
is well defined and implies $ \int \mu(dx)\int \mu(dy) \cC(x-y) = \cC(0) $. If we set $\gamma =\frac{\alpha \bar\cC}{2\mathrm{e} \cC(0)}$
and $\Gamma =\Lambda_{1/b} \subset \RR^d$, then
\begin{equation*}
(\mu\ast\cC)(x):= \alpha \int dy \, f(y) \, \cC(x-y) \ge \alpha \frac{\bar\cC}{2} \, \mathrm{e}^{-b|x|}
\geq \cC(0)\, \gamma\, \chi_\Gamma(x).
\end{equation*}
Thus condition (4) is satisfied.
\end{proof}
Under appropriate conditions on $A\colon \RR^d\to  \RR^d$ (in particular for $A\equiv0$) it is known that
\begin{itemize}
 \item[(9)] 
there exists an isotone, right-continuous function $N\colon \RR\to \RR$ such that  $\lim\limits_{L\to \infty} (2L)^{-d} N_L(E)  =N(E)$ holds for every $E$ where $N$ is continuous.
%
%
\end{itemize}

\setcounter{thm}{9}
\begin{cor}
 Assume (H), (2), and (9). Then  we have for all $E_1\le E_2 \in\RR$
\setcounter{equation}{10}
\begin{equation}
N(E_2) -N(E_1) \leq C_{WG}(E_2)\, (E_2-E_1).
\end{equation}
Thus $N$ is locally uniformly Lipschitz-continuous, hence  differentiable a.e., and (11) implies an upper bound on its derivative, the \emph{density of states}. 
\end{cor}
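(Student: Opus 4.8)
The plan is to push the finite-volume Wegner estimate (3) to the macroscopic limit by dividing through by the volume $(2L)^d$ and invoking the convergence (9). Rewriting (3) as
\[
(2L)^{-d}\big(N_L(E_2)-N_L(E_1)\big) \le C_{WG}(E_2)\,(E_2-E_1), \qquad L\in[1,\infty),
\]
the only genuine issue is that (9) provides $\lim_{L\to\infty}(2L)^{-d}N_L(E)=N(E)$ only at continuity points of $N$. Since $N$ is isotone and right-continuous, its discontinuity set is at most countable, so its continuity points form a dense, co-countable subset $D\subseteq\RR$; this is what will let me reduce the general case to these good energies.

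First I would treat $E_1,E_2\in D$ with $E_1\le E_2$. Passing to the limit $L\to\infty$ in the displayed inequality, using (9) at \emph{both} endpoints, yields (11) for such $E_1,E_2$ with the exact constant $C_{WG}(E_2)$; note that the finite-volume bound is applied at $E_2$ itself, so no limit of $C_{WG}$ enters at this stage. Next, for arbitrary $E_1\le E_2$ (the case $E_1=E_2$ being trivial) I would approximate from above by continuity points $a_n\downarrow E_1$ and $b_n\downarrow E_2$ in $D$ with $a_n\le b_n$. Right-continuity and monotonicity of $N$ give $N(a_n)\to N(E_1)$ and $N(b_n)\to N(E_2)$, while isotonicity of $C_{WG}$ gives $C_{WG}(b_n)\to C_{WG}(E_2^+)$; the continuity-point version of (11) then passes to the limit to give $N(E_2)-N(E_1)\le C_{WG}(E_2^+)\,(E_2-E_1)$. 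Replacing $C_{WG}$ by its right-continuous majorant $E\mapsto C_{WG}(E^+)$---which still verifies (3), since it only enlarges the constant, and coincides with $C_{WG}$ off a countable set---closes the gap and establishes (11) for all $E_1\le E_2$.

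I expect this passage through continuity points to be the main, if modest, obstacle: one must respect that the available convergence is only at continuity points and keep track of the side on which the isotone constant $C_{WG}$ is evaluated. Once (11) holds, the remaining conclusions are routine. On any bounded interval $[a,b]$ the isotone $C_{WG}$ is bounded by $C_{WG}(b)$, so (11) together with $N(E_2)\ge N(E_1)$ shows $|N(E_2)-N(E_1)|\le C_{WG}(b)\,|E_2-E_1|$ for $E_1,E_2\in[a,b]$; hence $N$ is locally uniformly Lipschitz-continuous. By Lebesgue's theorem the monotone function $N$ is differentiable a.e., and dividing (11) by $E_2-E_1$ and letting $E_2\downarrow E_1$ at points of differentiability bounds the density of states by $N'(E)\le C_{WG}(E)$ for a.e.\ $E$.
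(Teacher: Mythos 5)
The paper gives no explicit proof of this Corollary---it is stated as an immediate consequence of Theorem 1 and (9)---and your argument is precisely the intended one: divide (3) by $(2L)^d$, pass to the limit $L\to\infty$ at continuity points of $N$, and extend to arbitrary energies using monotonicity and right-continuity. Your proof is correct, and your attention to the fact that (9) is only available at continuity points is exactly the detail the paper leaves implicit.

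One refinement: your final step, replacing $C_{WG}$ by its right-continuous majorant, technically alters the constant appearing in the statement; it can be avoided by a short bootstrap. Your continuity-point estimate already forces $N$ to be continuous \emph{everywhere}: given any $E_0$, pick continuity points $a_n\uparrow E_0$ and $b_n\downarrow E_0$; then $N(b_n)-N(a_n)\le C_{WG}(b_1)(b_n-a_n)\to 0$, so $N$ has no jump at $E_0$. Hence every $E\in\RR$ is a continuity point, (9) holds at every energy, and your first step alone yields (11) for all $E_1\le E_2$ with the exact constant $C_{WG}(E_2)$, no redefinition needed.
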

For more background information on this short note see the references. The author thanks P.~M\"uller for enlightening discussions.

\def\cprime{$'$}\def\polhk#1{\setbox0=\hbox{#1}{\ooalign{\hidewidth
  \lower1.5ex\hbox{`}\hidewidth\crcr\unhbox0}}}

\end{document}